\newtheorem{thm}{Theorem}[section]
\newtheorem*{thm*}{Theorem}
\newtheorem{lem}[thm]{Lemma}
\newtheorem{prop}[thm]{Proposition}
\newtheorem{cor}[thm]{Corollary}
\newtheorem*{conj*}{Conjecture}
\newtheorem*{question*}{Question}
\theoremstyle{remark}
\newtheorem{rem}[thm]{Remark}
\theoremstyle{definition}
\newtheorem{defn}[thm]{Definition}
\newcommand{\ul}[1]{\underline{#1}}
\newcommand{\opname}[1]{\operatorname{\mathsf{#1}}}
\renewcommand{\mod}{\opname{mod}\nolimits}
\newcommand{\ssmod}{\opname{ssmod}\nolimits}
\newcommand{\grmod}{\opname{grmod}\nolimits}
\newcommand{\proj}{\opname{proj}\nolimits}
\newcommand{\add}{\opname{add}\nolimits}
\newcommand{\rad}{\opname{rad}\nolimits}
\newcommand{\grproj}{\opname{grproj}}
\newcommand{\Z}{\mathbb{Z}}
\newcommand{\N}{\mathbb{N}}
\newcommand{\C}{\mathbb{C}}
\newcommand{\id}{\mathrm{id}}
\newcommand{\Hom}{\opname{Hom}}
\newcommand{\End}{\opname{End}}
\newcommand{\Ext}{\opname{Ext}}
\newcommand{\GL}{\opname{GL}}
\newcommand{\ca}{{\mathcal A}}
\newcommand{\ch}{{\mathcal H}}
\newcommand{\cl}{{\mathcal L}}
\newcommand{\co}{{\mathcal O}}
\newcommand{\ct}{{\mathcal T}}
\renewcommand{\hat}[1]{\widehat{#1}}
\newcommand{\mm}[1]{#1}
\numberwithin{equation}{section}
\begin{document}

\title{A new equivalence between singularity categories of commutative algebras}

\author{Martin Kalck}
\email{martin.maths@posteo.de}

\begin{abstract}
We construct a triangle equivalence between the singularity categor\mm{ies of two isolated cyclic quotient singularities of Krull dimensions two and three, respectively.} 
This is the first example of a singular equivalence involving connected commutative algebras of odd and even Krull dimension.
\mm{In combination with Orlov's localization result, this gives further singular equivalences between certain quasi-projective varieties of dimensions two and three, respectively.}
\end{abstract}

\maketitle

\thispagestyle{empty}

\section{Introduction}
\noindent
Singularity categories were introduced by Buchweitz providing a general framework for Tate cohomology \cite{Buchweitz}. More recently, they have been studied in birational geometry \cite{Wemyss18}, in relation with string theory \& homological mirror symmetry \cite{Orlov2} and knot theory \cite{KR}.

There are many known triangle equivalences between singularity categories $D_{sg}(A):=D^b(A)/K^b(\proj-A)$ of (\emph{noncommutative}) connected Noetherian $\C$-algebras $A$. For example, for every ADE-surface singularity $R$ (except for $E_8$), we construct \mm{\emph{uncountably}} many algebras $A_i$, such that
$\mod A_i \ncong \mod A_j$ for $i \neq j$ and $D_{sg}(A_i) \cong D_{sg}(R)$ for all $i$, see \cite{KIWY15}.

This is in stark contrast to the commutative case: until recently, Kn{\"o}rrer's equivalences \begin{align}\label{E:Knoerrer}
D_{sg}(S/(f)) \cong D_{sg}(S \llbracket x,y \rrbracket/(f-xy)), \text{ for non-zero } f \in S:=\mathbb{C} \llbracket z_0, \ldots, z_d \rrbracket,
\end{align}
from 1987  \cite{KnorrerCMmodules} were the \emph{only} \cite{Matsui} known source of singular equivalences between singular connected commutative Noetherian $\C$-algebras, which are not analytically isomorphic.

Using the relative singularity category techniques developed in our joint works \cite{KY16, KY18}, one can construct the following additional family of singular equivalences \begin{align}\label{E:Yang}
D_{sg}\left(\C\llbracket y_1, y_2\rrbracket^{\frac{1}{n}(1, 1)}\right) \cong D_{sg}\left(\frac{\C[z_1, \ldots, z_{n-1}]}{(z_1, \ldots, z_{n-1})^2}\right),
\end{align}
where for a primitive $n$th root of unity $\epsilon \in \C$, we use the notation: 
\begin{align}
{\frac{1}{n}(a_1, \ldots, a_m)}=\left\langle \mathsf{diag}\left(\epsilon^{a_1}, \ldots \epsilon^{a_m}\right)
\right\rangle \subset \GL(\mm{m}, \C).
\end{align}
We learned about \eqref{E:Yang} from \cite{YangPrivate}. The equivalences \eqref{E:Yang} can also be deduced from \cite{Kawamata15} in combination with \cite{OrlovIdempotent}.
In our joint work with Joseph Karmazyn \cite{KK17}, we give another geometric proof of \eqref{E:Yang} \mm{and construct noncommutative finite dimensional algebras $K_{n, a}$} that generalize \mm{\eqref{E:Yang}} to all cyclic quotient surface singularities \mm{$\C\llbracket y_1, y_2\rrbracket^{\frac{1}{n}(1, a)}$}.

We state the main result of this paper. To the best of our knowledge, this is the first example of a singular equivalence between rings of odd and even Krull dimension.

\begin{thm}\label{T:Main}
There is a triangle equivalence between singularity categories
\begin{align}\label{E:first}
D_{sg}\left(\C\llbracket x_1, x_2, x_3\rrbracket^{\frac{1}{2}(1, 1, 1)}\right) \cong D_{sg}\left(\C\llbracket y_1, y_2\rrbracket^{\frac{1}{4}(1, 1)}\right). \end{align}
Moreover, these singularity categories are equivalent to the following singularity category:
\begin{align}\label{E:RadS}
 D_{sg}\left(\frac{\C[z_1, z_2, z_3]}{(z_1, z_2, z_3)^2}\right). 
 \end{align}
\end{thm}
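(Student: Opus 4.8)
\noindent
Here is the strategy I would follow. The last equivalence, between the right-hand side of \eqref{E:first} and \eqref{E:RadS}, is exactly the case $n=4$ of \eqref{E:Yang}; so the whole statement reduces to constructing a triangle equivalence $D_{sg}(R_1)\cong D_{sg}(A)$, where $R_1=\C\llbracket x_1,x_2,x_3\rrbracket^{\frac12(1,1,1)}$ and $A=\C[z_1,z_2,z_3]/(z_1,z_2,z_3)^2$. Both are local with isolated singularity, so their residue field $k$ generates the singularity category; hence $D_{sg}(R_1)\cong\per(C_1)$ and $D_{sg}(A)\cong\per(C_3)$, where $C_1$, $C_3$ are the derived endomorphism dg algebras of $k$ computed in $D_{sg}(R_1)$, resp.\ $D_{sg}(A)$. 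It therefore suffices to produce a quasi-isomorphism (or at least a derived equivalence) $C_1\simeq C_3$.

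To get a handle on $C_1$ I would use a noncommutative resolution of $R_1$. Since $R_1=S^{\mu_2}$ with $S=\C\llbracket x_1,x_2,x_3\rrbracket$ an isolated cyclic quotient singularity, the skew group algebra $\Lambda=S\rtimes\mu_2\cong\End_{R_1}(R_1\oplus L)$ — with $L=S^{\mathrm{odd}}$ the unique nonfree indecomposable maximal Cohen--Macaulay $R_1$-module — has finite global dimension $3$. Note that $R_1$ is \emph{not} Gorenstein, so $\Lambda$ is not a crepant noncommutative resolution, which is why one must work with the non-Gorenstein version of the relative singularity category formalism of \cite{KY16,KY18}. Letting $e\in\Lambda$ be the idempotent with $e\Lambda e\cong R_1$, that formalism identifies $D_{sg}(R_1)$ with a Verdier quotient of $D^b(\mod\Lambda)$, and in turn with $\per$ of a dg algebra whose cohomology and $A_\infty$-structure are read off from a minimal $\Lambda$-projective resolution of the simple module at $e$ (equivalently, from the minimal $R_1$-free resolution of $L$, or of $k$). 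Carrying out the parallel analysis for the cyclic quotient surface singularity $R_2=\C\llbracket y_1,y_2\rrbracket^{\frac14(1,1)}$ — again of finite Cohen--Macaulay type — yields a dg model for $D_{sg}(R_2)$, and \eqref{E:Yang} (or \cite{KK17}) then transports it to a dg model $C_3$ for $D_{sg}(A)$ on the Artinian side.

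The crux is the comparison of these two dg algebras. Concretely I would compute both $A_\infty$-structures explicitly: on the three-dimensional side the minimal $R_1$-free resolution of $k$ is not eventually periodic (precisely because $R_1$ is not Gorenstein), but using the graded Veronese presentation $R_1=\bigoplus_{j\ge0}\C[x_1,x_2,x_3]_{2j}$ the morphism spaces $\Hom_{D_{sg}(R_1)}(k,k[i])$ become accessible via cohomology on $\P^2$ (resp.\ on the minimal resolution $Y=\mathrm{Tot}_{\P^2}\co(-2)$), involving the line bundles $\co_{\P^2}(2j)$ and Bott vanishing; the same bookkeeping on $\P^1$ governs the surface computation for $\frac14(1,1)$, and one then checks that generators, relations, and higher multiplications match. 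I expect this last step — controlling the full $A_\infty$-structure on the three-dimensional side, with its infinite, non-periodic resolution, and matching it on the nose with the surface calculation — to be the main obstacle. A geometric variant that sidesteps part of the bookkeeping is to invoke Orlov's localization theorem to replace $R_1$ by $Y$, present $D_{sg}(R_1)$ as $D^b(Y)$ modulo perfect complexes, and use Kawamata's semiorthogonal decompositions for $[\C^3/\mu_2]$ — as in the deduction of \eqref{E:Yang} from \cite{Kawamata15,OrlovIdempotent} — to realize $D_{sg}(R_1)$ as generated by $\co_E,\co_E(1)$ on the exceptional $E\cong\P^2$, whose endomorphism dg algebra is then computed via Serre duality on $Y$ and compared with the $\frac14(1,1)$ picture. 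Finally, the statements about quasi-projective varieties advertised in the abstract follow from the affine equivalence \eqref{E:first} by one further application of Orlov's localization theorem.
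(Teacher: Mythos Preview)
Your reduction to $D_{sg}(R_1)\cong D_{sg}(K_{4,1})$ via \eqref{E:Yang} is correct, but the proposed attack through dg endomorphism algebras of $k$ and $A_\infty$-comparison is both far more laborious than necessary and, as written, rests on a factual error. The ring $R_1=\C\llbracket x_1,x_2,x_3\rrbracket^{\frac12(1,1,1)}$ does \emph{not} have a unique nonfree indecomposable maximal Cohen--Macaulay module: there are two, the canonical module $\omega$ and its syzygy $M=\Omega(\omega)$, cf.\ \cite[(16.10.5)]{Yoshino}. The McKay-type bijection between nontrivial irreducibles of $\mu_2$ and nonfree indecomposable MCM modules fails here precisely because $R_1$ is not Gorenstein; in particular $S\rtimes\mu_2$ is not the Auslander algebra of $\mathsf{CM}\,R_1$, and the relative singularity computation would have to be redone with the correct generator set. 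You also concede that matching the full $A_\infty$-structures is the ``main obstacle'' and do not carry it out, so the proposal remains a strategy rather than a proof.

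The paper bypasses all of this with an elementary argument based on Heller's stabilization. The key observations are: (i) the stable AR quiver shows $\ul{\End}_{R_1}(M)\cong\C$, so $\ch:=\ul{\add}\,M\subset\ul{\mod}\,R_1$ is equivalent to $\mod\C$; (ii) $\Omega(M)\cong M^{\oplus(n-1)}$ for some $n\geq 1$ (no $\omega$-summand, since $\omega$ is injective in $\mathsf{CM}\,R_1$), so $\ch$ is $\Omega$-stable; (iii) every finitely generated $R_1$-module lands in $\ch$ after at most four syzygies. A general lemma (Lemma~\ref{L:Unique}) says that on a semisimple abelian category the only left triangulated structure is the trivial one, so the looped category $(\ch,\Omega)$ is completely determined by the single integer $n$. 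The same description holds for $(\ul{\ssmod}\,K_{n,1},\Omega)$, and Keller--Vossieck's equivalence $S(\ul{\mod}\,R,\Omega)\cong D_{sg}(R)$ then yields $D_{sg}(R_1)\cong D_{sg}(K_{n,1})$. Finally $n=4$ is read off from $K_0(D_{sg}(R_1))\cong\Z/4\Z$ \cite{PavicShinder}. No higher products or infinite resolutions enter; the whole argument lives in degree zero of the stable category.
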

\begin{rem}
(1) The category \eqref{E:RadS} is idempotent complete, Hom-infinite, not Krull--Schmidt and can be explicitly described using  the Leavitt algebra of type $(1, 2)$ \cite{ChenYang, Leavitt}.

\noindent
(2) By \cite[Ex 2.15 \& 2.17]{PavicShinder}, there are isomorphisms involving Grothendieck groups  
\begin{align}\label{E:Grothen}
K_0\left(D_{sg}\left(\C\llbracket x_1, x_2, x_3\rrbracket^{\frac{1}{2}(1, 1, 1)}\right)\right) \cong \Z/4\Z \cong K_0\left(D_{sg}\left(\C\llbracket y_1, y_2\rrbracket^{\frac{1}{4}(1, 1)}\right)\right),
\end{align}
which provide evidence for \eqref{E:first} and will be used in our proof below.

\mm{
\noindent
(3) Let $X$ and $Y$ be complex quasi-projective varieties of Krull dimensions $2$ and $3$, respectively, with only isolated singularities $\mathsf{Sing}(X)=\{s_1, \ldots, s_n\}$ and $\mathsf{Sing}(Y)=\{t_1, \ldots, t_n\}$.

Assume that the $\mathfrak{m}$-adic completions of the local rings in the singular points satisfy:
\begin{align}
\hat\co_{X, {s_i}} \cong \C\llbracket y_1, y_2\rrbracket^{\frac{1}{4}(1, 1)}   \text{ and } \hat\co_{Y, t_i} \cong \C\llbracket x_1, x_2, x_3\rrbracket^{\frac{1}{2}(1, 1, 1)}.
\end{align}
There are triangle equivalences, where $(-)^\omega$ denotes the idempotent completion, cf. \cite{OrlovIdempotent}.
\begin{align}\label{E:SingEqQuasiProj}
D_{sg}(X)^\omega \cong \bigoplus_{i=1}^n D_{sg}\left(\C\llbracket y_1, y_2\rrbracket^{\frac{1}{4}(1, 1)}\right) \cong \bigoplus_{i=1}^n D_{sg}\left(\C\llbracket x_1, x_2, x_3\rrbracket^{\frac{1}{2}(1, 1, 1)}\right) \cong  D_{sg}(Y)^\omega. 
\end{align}
The equivalence in the middle is \eqref{E:first} and the equivalences on the left and on the right follow from \cite{OrlovIdempotent}, since the categories in Theorem \ref{T:Main} are idempotent complete, cf. part (1) of this remark. We can rephrase \eqref{E:SingEqQuasiProj} as follows: the singularity categories $D_{sg}(X)$ and $D_{sg}(Y)$ are triangle equivalent up to taking direct summands.
}
\end{rem}

\subsection{Strategy of proof} 
By \eqref{E:Yang}, it suffices to show that there is a singular equivalence between $R_{1, 1, 1}:=\C\llbracket x_1, x_2, x_3\rrbracket^{\frac{1}{2}(1, 1, 1)}$ and $K_{4,1}:=\C[z_1, z_2, z_3]/(z_1, z_2, z_3)^2$.
Building on work of Auslander \& Reiten \cite{AuslanderReiten} and Keller \& Vossieck \cite{KellerVossieck87}, we show that $D_{sg}(R_{1, 1, 1})$ is triangle equivalent to the Heller stabilization \cite{Heller} of a left triangulated category, whose underlying \mm{$\C$-linear} category is semisimple abelian with a unique simple object -- this object corresponds to the maximal Cohen--Macaulay \mm{$R_{1, 1, 1}$-}module $M=\Omega(\omega_{R_{1, 1, 1}})$. By Proposition \ref{P:Key}, the structure of triangulated categories $\ct$ arising in this way, is completely determined by the order of their Grothendieck groups $K_0(\ct)$ -- indeed, if $\left|{K_0(\ct)}\right|=n$ then $\ct \cong D_{sg}(K_{n,1})$, where $K_{n, 1}=\C[z_1, \ldots, z_{n-1}]/(z_1, \ldots, z_{n-1})^2$. This result is inspired by Chen \cite{ChenQuadratic}. Combining this with \eqref{E:Grothen}, shows Theorem \ref{T:Main}.

We will prove the theorem in Section \ref{S:Proof} using the preparations in Section \ref{S:Prep}.

\medskip
\noindent
\emph{Acknowledgement}.
I am very grateful to Evgeny Shinder for a remark that started this work. Further inspiration came from my joint works with Dong Yang \cite{KY16, KY18} \& Joe Karmazyn \cite{KK17} and from email exchanges with Xiao-Wu Chen and Yujiro Kawamata. I would like to thank Xiao-Wu Chen, Yujiro Kawamata, Michael Wemyss and Dong Yang for comments on this text and for answering my questions. \mm{I am very grateful to the referee for detailed comments, suggestions and corrections, which improved this text. In particular, the referee pointed out the examples of \emph{graded} singular equivalences \cite[Corollary 3.23 (d)]{HIMO}, which led to the new appendix. }

\section{Preparation: stabilization of left triangulated categories}\label{S:Prep}
\noindent
\mm{We fix a field $k$}. All categories and functors below are $k$-linear. \mm{For a right Noetherian $k$-algebra $R$, we write $\mod R$ for the category of finitely generated right $R$-modules.} 

Our main reference for Heller's stabilization \cite{Heller} of 
looped and left triangulated categories is \cite{ChenQuadratic}, which builds on works of Keller \& Vossieck \cite{KellerVossieck87} and Beligiannis \cite{Bel00}.

\begin{lem}\label{L:Unique}
Let $\ch$ be a semisimple abelian category, with an endofunctor $\Omega\colon \ch \to \ch$.

Then the only left triangulated structure on $(\ch, \Omega)$ is the trivial structure -- i.e. all left triangles are isomorphic to direct sums of trivial left triangles. 
\end{lem}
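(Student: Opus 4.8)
The plan is to prove the ``i.e.'' reformulation: in \emph{any} left triangulated structure on $(\ch,\Omega)$, every distinguished left triangle is isomorphic to a direct sum of trivial ones. (The reverse inclusion --- that trivial left triangles and their direct sums are always distinguished --- is automatic from the axioms, so this pins the structure down to the trivial one.) The only non-formal input will be that $\ch$ is semisimple: this forces every short exact sequence in $\ch$ to split, hence every morphism of $\ch$ to be, up to isomorphism, a direct sum of an identity map, a map into a zero object, and a map out of a zero object.

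First I would record the standard formal properties of a left triangulated category $(\ch,\Omega)$ that I will use, all of which are in \cite{ChenQuadratic, Bel00}: $\Omega$ is additive, so $\Omega 0=0$; consecutive maps in a distinguished left triangle compose to zero; every morphism occurs as the last map of a distinguished left triangle $\Omega Z\ra A\ra B\ra Z$; distinguished left triangles are stable under isomorphism, finite direct sums, and rotation; for every object $T$ the functor $\Hom(T,-)$ sends a distinguished left triangle $\Omega Z\ra A\ra B\ra Z$ to a long exact sequence $\cdots\ra\Hom(T,\Omega Z)\ra\Hom(T,A)\ra\Hom(T,B)\ra\Hom(T,Z)$, unbounded to the left; and any two distinguished left triangles on a fixed morphism are isomorphic via an isomorphism which is the identity on the last two terms (five lemma applied to these long exact sequences, exactly as in the triangulated case).

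Next, starting from an arbitrary distinguished left triangle $\Omega Z\xrightarrow{w}A\xrightarrow{u}B\xrightarrow{v}Z$, I would use semisimplicity to choose splittings $B\cong\ker v\oplus\im v$ and $Z\cong\im v\oplus\cok v$ with respect to which $v$ is identified with the direct sum of the three elementary morphisms $1_{\im v}$, $\ker v\ra 0$, and $0\ra\cok v$. Completing each of these to a distinguished left triangle and forming the direct sum gives a distinguished left triangle on $v$; by the uniqueness statement above, the original left triangle is isomorphic to this direct sum. Thus it remains only to determine the distinguished left triangle on each of the three elementary morphisms.

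Finally I would carry out these three computations, each immediate from the facts recorded above. For $1_M$: in a distinguished left triangle $\Omega M\xrightarrow{w'}A'\xrightarrow{u'}M\xrightarrow{1_M}M$ the vanishing of consecutive compositions (for the triangle and for its rotation) forces $u'=0$ and $w'=0$, and then the long exact $\Hom(T,-)$-sequence gives $\Hom(T,A')=0$ for all $T$, so $A'=0$ and the triangle is $\Omega M\xrightarrow{0}0\xrightarrow{0}M\xrightarrow{1_M}M$. For $\ker v\ra 0$ and for $0\ra\cok v$, the long exact $\Hom(T,-)$-sequence (its neighbouring terms vanish because $\Omega 0=0$) shows the one remaining structure map is an isomorphism, so these triangles are isomorphic to $0\ra\ker v\xrightarrow{1}\ker v\ra 0$ and to $\Omega(\cok v)\xrightarrow{1}\Omega(\cok v)\xrightarrow{0}0\xrightarrow{0}\cok v$, respectively. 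All three are trivial left triangles, so the original left triangle is a direct sum of trivial ones, as claimed. I do not expect a genuine obstacle here: the argument is formal, and the only things needing care are the sign and rotation conventions for left triangles and the bookkeeping of the decomposition of $v$; semisimplicity is used solely to split $v$ into its isomorphism, kernel, and cokernel parts.
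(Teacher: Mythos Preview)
Your proof is correct and follows essentially the same approach as the paper: reduce an arbitrary left triangle to the completion of its last morphism, use semisimplicity to decompose that morphism into elementary summands, and verify that the completion of each summand is a trivial left triangle (uniqueness of completion then finishes). The only cosmetic difference is that the paper splits $f$ into two pieces (an identity $\id_D$ and a zero map $B'\to C'$) and is terse about the completions, whereas you split into three pieces (further decomposing the zero map as $\ker v\to 0$ and $0\to\cok v$) and spell out each trivial triangle explicitly.
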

\begin{proof}
\mm{By an axiom of left triangulated categories $\cl$ (cf. \cite{BM}), every morphism $B \xrightarrow{f} C$ in $\cl$ gives rise to a left triangle 
\begin{align}\label{E:leftTriangle}
\Omega(C) \to A \to B \xrightarrow{f} C. 
\end{align}
One can check that every left triangle in $\cl$ is isomorphic to a left triangle of the form \eqref{E:leftTriangle}, by adapting the proof for triangulated categories (cf.~e.g.~\cite[Prop.~10.1.15]{KashiwaraSchapira}) to the setup of left triangulated categories}. Since $\ch$ is semisimple, $f$ is isomorphic to 
\begin{align}
f'\colon B' \oplus D \xrightarrow{\begin{pmatrix} 0 & 0 \\ 0 & \id_D \end{pmatrix}} C' \oplus D. 
\end{align}
Completing $f'$ to a left triangle yields a direct sum of trivial left triangles, which is isomorphic to \eqref{E:leftTriangle}.
\end{proof}

\begin{cor}\label{C:sesisubcat}
Let $\ch$ be a full subcategory of a left triangulated category $(\cl, \Omega)$.
Assume that $\ch$ is semisimple abelian and $\Omega(\ch) \subseteq \ch$.
 
Then $(\ch, \Omega)$ is a left triangulated subcategory of $(\cl, \Omega)$.
\end{cor}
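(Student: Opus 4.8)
The plan is to invoke the standard criterion for a full additive subcategory of a left triangulated category to itself be a left triangulated subcategory: once we know that $\ch$ is closed under $\Omega$ (which is assumed) and that every morphism of $\ch$ can be completed to a left triangle all of whose terms lie in $\ch$, the remaining axioms of a left triangulated category restrict to $\ch$ automatically, by the same verification as in the triangulated case (cf.\ the treatment of triangulated subcategories in \cite{KashiwaraSchapira}). So the whole content is to establish this last closure property, and I would do this by reusing the splitting from the proof of Lemma~\ref{L:Unique}.

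Concretely, let $f\colon B\to C$ be a morphism of $\ch$. Since $\ch$ is semisimple abelian, $f$ is isomorphic, as a morphism, to a morphism $f'\colon B'\oplus D \to C'\oplus D$ with matrix $\begin{pmatrix} 0 & 0 \\ 0 & \id_D\end{pmatrix}$. Now the zero morphism $0\colon B'\to C'$ completes in $\cl$ to the left triangle $\Omega(C') \to \Omega(C')\oplus B' \to B' \xrightarrow{0} C'$, and $\id_D$ completes to the trivial left triangle $\Omega(D) \to 0 \to D \xrightarrow{\id_D} D$. By the additivity axiom for left triangles, the direct sum of these two is again a left triangle of $\cl$, and it is a completion of $f'$ with third term $\Omega(C')\oplus B'$. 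Since $\ch$ is closed under finite direct sums and under $\Omega$, all of its terms lie in $\ch$; transporting along the isomorphism $f'\cong f$ via the isomorphism axiom for left triangles then yields a left triangle on $f$ whose terms again lie in $\ch$. This proves the closure property, hence that $(\ch,\Omega)$ is a left triangulated subcategory of $(\cl,\Omega)$; by Lemma~\ref{L:Unique} the induced structure is the trivial one.

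I do not expect a genuine obstacle here. The only point needing a little care is the (standard but slightly tedious) check that, once the existence of triangles in $\ch$ is available, the morphism-filling axiom and the octahedral-type axiom of $\cl$ restrict to $\ch$ — that is, whenever all the input data of such an axiom lie in $\ch$, the asserted output can be chosen in $\ch$. Here semisimplicity again trivializes matters: every morphism of $\ch$ splits, so up to isomorphism any such configuration is a direct sum of the configurations coming from zero maps and identity maps, for which the required output manifestly lies in $\ch$.
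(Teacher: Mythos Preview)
Your proposal is correct and rests on the same core idea as the paper: use semisimplicity to split any morphism of $\ch$ into a zero map and an identity, whose completions to left triangles are trivial and hence lie in $\ch$.

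The organization differs slightly. You plan to verify the left-triangulated axioms for $\ch$ one by one, handling (LTR3) and the octahedral axiom by the same splitting trick. The paper instead short-circuits this: having observed (via the proof of Lemma~\ref{L:Unique}) that the class of left triangles of $\cl$ lying in $\ch$ coincides with the class of direct sums of trivial left triangles, it simply invokes that this trivial class is already known to be a left triangulated structure on $(\ch,\Omega)$ (this is implicit in Lemma~\ref{L:Unique}). That way no axiom-by-axiom check is needed. Your route is more explicit but slightly longer; the paper's is terser but relies on the reader accepting that the trivial structure is indeed a left triangulated structure. Either way the argument goes through.
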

\begin{proof}
Following the proof of Lemma \ref{L:Unique}, shows that taking all left triangles in $(\cl, \Omega)$, which lie in $\ch$ induces the trivial left triangulated structure on $(\ch, \Omega)$. By Lemma \ref{L:Unique} this is the unique left triangulated structure on $(\ch, \Omega)$.   
\end{proof}

\noindent
A pair of a category $L$ with an endofunctor $\Omega$ is called a \emph{looped category}. A functor $F$ between looped categories $(L_1, \Omega_1), (L_2, \Omega_2)$, such that there is a natural isomorphism $F\Omega_1\cong \Omega_2 F$, is called a \emph{looped functor}.
For any looped category $(L, \Omega)$, Heller constructs its \emph{stabilization} $(S(L, \Omega), \Sigma)$, a looped category with an automorphism $\Sigma$ and a looped functor
$\mathsf{S} \colon L \to S(L, \Omega)$. The pair $(S(L, \Omega), \Sigma), \mathsf{S}$ enjoys a universal property \cite[Proposition 1.1]{Heller}. Using this property, one can show that a looped functor $F$ between looped categories induces a functor $S(F)$ between their stabilizations. The functor $S(F)$ is called \mm{the} \emph{stabilization} of $F$. Finally, if $L$ is left triangulated, then
$S(L, \Omega)$ is a triangulated category with shift functor $\Sigma^{-1}$ and triangles in $S(L, \Omega)$ are induced by left triangles in $L$, cf.~\cite[Section 3]{Bel00}.

\begin{cor}\label{C:Stabili}
We keep the assumptions in Corollary \ref{C:sesisubcat}. In addition, we assume that for any $X \in \cl$, there exists $n(X) \in \N$, such that
$\Omega^{n(X)} \in \ch$. Then the stabilization of the inclusion $(\ch, \Omega) \subseteq (\cl, \Omega)$ yields a triangle equivalence 
$S(\ch, \Omega) \cong S(\cl, \Omega)$.
\end{cor}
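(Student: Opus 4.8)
The plan is to prove the corollary by showing that the stabilization $S(\iota)\colon S(\ch,\Omega)\to S(\cl,\Omega)$ of the inclusion $\iota\colon(\ch,\Omega)\hookrightarrow(\cl,\Omega)$ is an equivalence of categories. By Corollary~\ref{C:sesisubcat} the functor $\iota$ is left triangulated (it carries left triangles of $\ch$ to left triangles of $\cl$), so $S(\iota)$ is a triangle functor; and a triangle functor whose underlying functor is an equivalence is automatically a triangle equivalence, since a quasi-inverse can be upgraded to a triangle functor. Hence it suffices to check that $S(\iota)$ is fully faithful and essentially surjective.

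I would first record the two standard features of Heller's stabilization \cite{Heller, Bel00, ChenQuadratic} on which the argument rests. First, every object of $S(L,\Omega)$ is isomorphic to $\Sigma^{j}\mathsf S(X)$ for some $X\in L$ and $j\in\Z$; the structure functor $\mathsf S$ is looped, so $\mathsf S\Omega^{N}\cong\Sigma^{N}\mathsf S$; $\Sigma$ is an autoequivalence; and $S(\iota)$ is compatible with $\mathsf S$ and with $\Sigma$ up to isomorphism, the inclusion $\iota$ being strictly looped because the loop functor of $\ch$ is by assumption the restriction of that of $\cl$. Second, morphism spaces in the stabilization are computed as filtered colimits of morphism spaces in $L$ along the maps induced by $\Omega$; concretely, for $X,Y\in L$ and $j\ge 0$,
\[
\Hom_{S(L,\Omega)}\!\big(\mathsf S(X),\Sigma^{j}\mathsf S(Y)\big)\;=\;\colim_{k}\Hom_{L}\!\big(\Omega^{k}X,\Omega^{k+j}Y\big),
\]
with the evident modification for $j<0$, the transition maps being given by applying $\Omega$.

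For essential surjectivity, take $Z\in S(\cl,\Omega)$ and write $Z\cong\Sigma^{j}\mathsf S(X)$ with $X\in\cl$. Choose $N=n(X)\in\N$ with $\Omega^{N}X\in\ch$. Then $\mathsf S(X)\cong\Sigma^{-N}\mathsf S(\Omega^{N}X)$, and $\mathsf S(\Omega^{N}X)$ lies in the image of $S(\iota)$ because $\Omega^{N}X\in\ch$; applying the autoequivalences $\Sigma^{j}$ and $\Sigma^{-N}$, which commute with $S(\iota)$ up to isomorphism, shows that $Z$ lies in the essential image of $S(\iota)$. For full faithfulness, fix $X,Y\in\ch$ and $j\in\Z$. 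Since $\Omega(\ch)\subseteq\ch$, all the objects $\Omega^{k}X,\Omega^{k+j}Y$ occurring in the colimit above lie in $\ch$, and since $\ch$ is a \emph{full} subcategory of $\cl$ the groups $\Hom_{\ch}(\Omega^{k}X,\Omega^{k+j}Y)$ and $\Hom_{\cl}(\Omega^{k}X,\Omega^{k+j}Y)$, together with all their transition maps, literally coincide. Hence the colimit computing $\Hom_{S(\ch,\Omega)}(\mathsf S X,\Sigma^{j}\mathsf S Y)$ equals the one computing $\Hom_{S(\cl,\Omega)}(\mathsf S X,\Sigma^{j}\mathsf S Y)$, so $S(\iota)$ is bijective on these morphism spaces. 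As every object of $S(\ch,\Omega)$ is a $\Sigma$-shift of one of the form $\mathsf S(X)$ with $X\in\ch$, and $\Sigma$ is an autoequivalence on both sides commuting with $S(\iota)$, it follows that $S(\iota)$ is fully faithful, and therefore a triangle equivalence.

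The main --- and rather mild --- obstacle is bookkeeping with Heller's construction: one must make precise the colimit description of the morphism spaces and the interaction of $\mathsf S$, $\Sigma$ and $\Omega$, and confirm that the conventions in force are exactly those under which the stabilization of a left triangulated functor is a triangle functor, as in \cite{Bel00, ChenQuadratic}. Once that framework is fixed there is no genuine difficulty: full faithfulness is forced by fullness of $\ch\subseteq\cl$ together with $\Omega(\ch)\subseteq\ch$, and essential surjectivity is forced by the hypothesis that powers of $\Omega$ eventually carry every object of $\cl$ into $\ch$ --- the substantive input, namely that $(\ch,\Omega)$ really is a left triangulated subcategory, having already been supplied by Corollary~\ref{C:sesisubcat}.
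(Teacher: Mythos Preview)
Your argument is correct and is essentially the same approach as the paper's: the paper proves the corollary by invoking Corollary~\ref{C:sesisubcat} together with \cite[Cor.~2.3 \& 2.7]{ChenQuadratic}, and what you have written is precisely an explicit unpacking of those two cited results (full faithfulness from fullness of $\ch\subseteq\cl$ and $\Omega(\ch)\subseteq\ch$, essential surjectivity from the cofinality hypothesis $\Omega^{n(X)}(X)\in\ch$). The only difference is that you spell out the colimit description of $\Hom$ in Heller's stabilization rather than citing it, which is a matter of presentation, not of strategy.
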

\begin{proof}
This follows from Corollary \ref{C:sesisubcat} in combination with \cite[Cor 2.3. \& 2.7 ]{ChenQuadratic}.
\end{proof}

\noindent
Let $R$ be a \mm{right} Noetherian ring and let $\Omega$ be the syzygy functor on the stable module category $\ul{\mod}\,{R}:=\mod R/\proj R$. Then the pair $(\ul{\mod}\,{R}, \Omega)$ has a left triangulated structure \cite[Theorem 3.1.]{BM}, where the left triangles are isomorphic to sequences 
$\Omega X \xrightarrow{u} Y \xrightarrow{v} Z \xrightarrow{w} X$ arising from commutative diagrams in $\mod R$, where $P$ is projective and the rows are exact:
\[
\xymatrix{0 \ar[r] & \Omega X \ar[r] \ar[d]^u& P \ar[r] \ar[d] & X \ar[r] \ar[d]^{\id_X}& 0 \\
0 \ar[r] &Y \ar[r]^v & Z \ar[r]^w & X \ar[r] & 0 \mm{.}
}
\]

\noindent
The next result generalizes Buchweitz\mm{'s} equivalence \cite{Buchweitz} from Gorenstein rings to Noetherian rings, see \cite{KellerVossieck87} \& \cite[Cor.~3.9(1)]{Bel00}. It is a key ingredient in our proof of Theorem \ref{T:Main}. 

\begin{thm}\label{T:KV}
Let $R$ be a \mm{right} Noetherian ring. Then there is a triangle equivalence
\begin{align}
S(\ul{\mod}\,{R}, \Omega) \cong D_{sg}(R).
\end{align}
\end{thm}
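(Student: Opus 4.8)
The statement to prove is Theorem~\ref{T:KV}: for a right Noetherian ring $R$, there is a triangle equivalence $S(\ul{\mod}\,R, \Omega) \cong D_{sg}(R)$.

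The plan is to invoke the universal property of Heller's stabilization together with the Verdier localization presentation of the singularity category. First I would note that there is a canonical looped functor $\mod R \to D^b(R) \to D_{sg}(R)$ sending a module to its image in the singularity category; since projectives die in $D_{sg}(R)$, this factors through $\ul{\mod}\,R$, giving a looped functor $\pi \colon \ul{\mod}\,R \to D_{sg}(R)$. Here the loop functor on $D_{sg}(R)$ is $[-1]$ (inverse shift), and the natural isomorphism $\pi \circ \Omega \cong [-1] \circ \pi$ comes from the standard short exact sequence $0 \to \Omega X \to P \to X \to 0$ with $P$ projective, which becomes a triangle $\Omega X \to 0 \to X \to \Omega X[1]$ in $D_{sg}(R)$. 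Because $[-1]$ is an automorphism of $D_{sg}(R)$, the universal property of $(S(\ul{\mod}\,R,\Omega), \Sigma)$ yields a unique (up to isomorphism) looped functor $\bar\pi \colon S(\ul{\mod}\,R, \Omega) \to D_{sg}(R)$ with $\bar\pi \circ \mathsf S \cong \pi$. One then checks that $\bar\pi$ is a triangle functor: triangles in $S(\ul{\mod}\,R,\Omega)$ are induced from left triangles in $\ul{\mod}\,R$, which by \cite{BM} come from the commutative diagrams displayed above, and these diagrams map to genuine distinguished triangles in $D_{sg}(R)$.

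Next I would prove that $\bar\pi$ is an equivalence. Essential surjectivity is the easier half: every object of $D^b(R)$ is, up to shift, represented by a module (resolve and truncate; more precisely, any bounded complex is isomorphic in $D_{sg}(R)$ to $\Omega^n$ of a module for $n \gg 0$), so every object of $D_{sg}(R)$ lies in the image of $\pi$ up to applying $\Sigma^{\pm}$, hence in the image of $\bar\pi$. For full faithfulness, the cleanest route is to use the description of morphisms in both categories. In $S(\ul{\mod}\,R, \Omega)$ the Hom-sets are computed as a colimit $\Hom_{S}(\mathsf S X, \mathsf S Y) = \colim_n \ul{\Hom}_R(\Omega^n X, \Omega^n Y)$, while in $D_{sg}(R)$ one has, by the Verdier calculus of fractions, $\Hom_{D_{sg}(R)}(X, Y) = \colim \ul{\Hom}_R(\Omega^n X, \Omega^n Y)$ as well — this is precisely the content of the Keller--Vossieck argument (\cite{KellerVossieck87}) and of \cite[Cor.~3.9]{Bel00}: a roof $X \leftarrow Z \to Y$ in $D_{sg}(R)$ can be replaced, by lifting the quasi-isomorphism to high enough syzygies and using that $K^b(\proj R)$-morphisms become invertible, by an honest module map $\Omega^n X \to \Omega^n Y$, and two such represent the same morphism iff they agree after a further syzygy shift. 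Matching these two colimit descriptions under $\bar\pi$ gives full faithfulness.

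The main obstacle is the full-faithfulness step, and specifically the careful comparison of the two colimit formulas for Hom-sets, because this requires a clean analysis of the localization $D^b(R) \to D_{sg}(R) = D^b(R)/K^b(\proj R)$ in the non-Gorenstein setting: one must verify that every morphism in the Verdier quotient admits a representative of the form $\Omega^n(f)$ for a module homomorphism $f$ and $n$ sufficiently large, and that the equivalence relation on such representatives is exactly ``becomes equal after further syzygy.'' For Gorenstein rings this is Buchweitz's theorem; the point here is that Keller--Vossieck and Beligiannis already carried this out for arbitrary (right) Noetherian rings, so I would structure the proof to reduce cleanly to \cite{KellerVossieck87} and \cite[Cor.~3.9(1)]{Bel00} rather than re-deriving the calculus of fractions from scratch, invoking those results for the Hom-comparison and then only needing to check compatibility with the triangulated structures, which is formal from the way triangles are induced on both sides by the diagrams above.
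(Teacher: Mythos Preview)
The paper does not actually prove Theorem~\ref{T:KV}: it states the result and refers the reader to \cite{KellerVossieck87} and \cite[Cor.~3.9(1)]{Bel00}. Your proposal therefore goes well beyond what the paper does, by sketching the construction of the comparison functor via the universal property of the stabilization, and then outlining why it is an equivalence. The sketch you give is correct and is essentially the argument one finds in those references: the looped functor $\ul{\mod}\,R \to D_{sg}(R)$, its extension to the stabilization, essential surjectivity via truncation, and full faithfulness through the colimit description of Hom-sets on both sides. You also correctly identify the full-faithfulness step as the substantive part and trace it back to precisely the same citations the paper uses. In short, your approach is the standard one and agrees with what the cited literature does; the paper itself simply quotes the result.
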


\mm{
\begin{defn}
We call a $k$-linear category $\ca$ \emph{split simple}, if there is a $k$-linear equivalence \begin{align} \ca \cong \mod k.
\end{align}
\end{defn}
\begin{rem}
If $k$ is algebraically closed and $\ca$ is Hom-finite and semisimple abelian with a unique simple object (up to isomorphism), then $\ca$ is split simple.
\end{rem}}

Following \cite{KK17}, we denote the algebra $k[z_1, \ldots, z_{n-1}]/(z_1,\ldots, z_{n-1})^2$ by $K_{n, 1}$. 
\mm{One of our main examples of a split simple category is the subcategory of semisimple $K_{n, 1}$-modules, which we denote by $\ssmod K_{n, 1}$. Its objects are isomorphic to finite direct sums of the simple $K_{n, 1}$-module $k[z_1, \ldots, z_{n-1}]/(z_1,\ldots, z_{n-1})$. We will write $\ul{\ssmod} \ K_{n, 1}$ for the image of $\ssmod K_{n, 1}$ under the additive quotient functor
\begin{align}
q \colon \mod K_{n, 1} \to \ul{\mod} \ K_{n,1}:=\mod K_{n, 1}/\proj K_{n, 1}.
\end{align}
Since the semisimple $K_{n, 1}$-modules are not projective, $q$ induces a $k$-linear equivalence $\ssmod K_{n, 1} \cong \ul{\ssmod} \ K_{n, 1}$. The syzygy $\Omega_{K_{n, 1}}(M)$ of a finitely generated $K_{n, 1}$-module $M$ is a submodule of the radical $\rad (K_{n, 1}^{\oplus c}) \cong (\rad K_{n, 1})^{\oplus c}$, which is semisimple. Thus, 
\begin{align} \label{E:SysSeSi}
\Omega_{K_{n, 1}}(M) \in \ul{\ssmod} \ K_{n, 1}.
\end{align}  
It follows from Corollary \ref{C:sesisubcat} that 
\begin{align}\label{2.6}
(\ul{\ssmod} \ K_{n, 1}, \Omega_{K_{n, 1}}) \subseteq (\ul{\mod} \ K_{n, 1}, \Omega_{K_{n, 1}})  
\end{align}
is a left triangulated subcategory.}

We have the following classification of stabilizations of \mm{split simple} categories.

\begin{prop}\label{P:Key} 
Let $(\ch, \Omega)$  be a left triangulated category, 
where $\ch$ is a \mm{split simple} category with simple object $s$. 
Then the following statements hold:
\begin{enumerate}
\item $\Omega(s) \cong s^{\oplus (n-1)}$ in $\ch$, for some integer $n \geq 1$.
\item There is a triangle equivalence $S(\ch, \Omega) \cong D_{sg}\left(K_{n, 1}\right)$.
\item There is an isomorphism of groups $K_0(S(\ch, \Omega)) \cong \Z/n\Z$.
\end{enumerate}
\end{prop}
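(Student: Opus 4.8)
\textbf{Proof plan for Proposition \ref{P:Key}.}

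The plan is to reduce everything to a single endofunctor computation on a category equivalent to $\mod k$, then feed the result into Theorem \ref{T:KV} via the split simple category $\ul{\ssmod}\,K_{n,1}$. First I would prove (1). Since $\ch$ is split simple, it is equivalent to $\mod k$ and hence every object is isomorphic to $s^{\oplus m}$ for a unique $m \geq 0$, and $\End_\ch(s) \cong k$. The endofunctor $\Omega$ is additive (as part of the looped/left triangulated structure), so it is determined up to isomorphism by $\Omega(s)$, which must be of the form $s^{\oplus(n-1)}$ for some integer $n-1 \geq 0$, i.e.\ $n \geq 1$. (Writing the exponent as $n-1$ is just a normalization chosen to make the later identification with $K_{n,1}$ clean, since $\rad K_{n,1}$ has $k$-dimension $n-1$.)

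Next I would prove (2), which is the heart of the statement. By Lemma \ref{L:Unique}, the left triangulated structure on $(\ch,\Omega)$ is the trivial one: all left triangles are direct sums of trivial left triangles $\Omega(C) \to 0 \to C \xrightarrow{\id} C$ and $\Omega(C) \xrightarrow{\id} \Omega(C) \to 0 \to C$. So the pair $(\ch,\Omega)$, together with its left triangulated structure, is determined up to equivalence by the single integer $n$. It therefore suffices to exhibit \emph{one} left triangulated category with split simple underlying category and $\Omega(s) \cong s^{\oplus(n-1)}$ whose stabilization is $D_{sg}(K_{n,1})$, and the natural candidate is $(\ul{\ssmod}\,K_{n,1}, \Omega_{K_{n,1}})$ sitting inside $(\ul{\mod}\,K_{n,1},\Omega_{K_{n,1}})$ as in \eqref{2.6}. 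Concretely: let $\cl = \ul{\mod}\,K_{n,1}$ with its left triangulated structure from \cite[Theorem 3.1]{BM}, and let $\ch' = \ul{\ssmod}\,K_{n,1}$, which is split simple with simple object the class of $k[z_1,\ldots,z_{n-1}]/(z_1,\ldots,z_{n-1})$. One computes the syzygy of this simple module: minimally presenting it as $K_{n,1} \twoheadrightarrow k$ with kernel $\rad K_{n,1} \cong k^{\oplus(n-1)}$ (a semisimple, hence radical-square-zero, module), so $\Omega_{K_{n,1}}(s) \cong s^{\oplus(n-1)}$ in $\ul{\ssmod}\,K_{n,1}$ — matching the integer $n$. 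By \eqref{E:SysSeSi} every syzygy of any $K_{n,1}$-module already lies in $\ul{\ssmod}\,K_{n,1}$, so the hypothesis of Corollary \ref{C:Stabili} holds with $n(X) = 1$ for all $X$; hence the inclusion $\ch' \hookrightarrow \cl$ stabilizes to a triangle equivalence $S(\ch',\Omega_{K_{n,1}}) \cong S(\ul{\mod}\,K_{n,1}, \Omega_{K_{n,1}})$, and the latter is $D_{sg}(K_{n,1})$ by Theorem \ref{T:KV}. Finally, since both $(\ch,\Omega)$ and $(\ch',\Omega_{K_{n,1}})$ carry the trivial left triangulated structure (Lemma \ref{L:Unique}) and have split simple underlying categories with the same integer $n$, there is a looped equivalence $(\ch,\Omega) \cong (\ch',\Omega_{K_{n,1}})$ respecting the left triangulated structures; applying $S(-)$ and functoriality of stabilization gives $S(\ch,\Omega) \cong S(\ch',\Omega_{K_{n,1}}) \cong D_{sg}(K_{n,1})$.

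Part (3) then follows formally: $K_0$ is a triangle invariant, so $K_0(S(\ch,\Omega)) \cong K_0(D_{sg}(K_{n,1}))$, and the right-hand group is $\Z/n\Z$ — this can be read off directly from the combinatorics of the trivial left triangulated structure (the stable category of the radical-square-zero algebra $K_{n,1}$ is generated by a single object $s$ whose only relation in $K_0$ comes from the triangle $s^{\oplus(n-1)} = \Omega(s) \to 0 \to s \to s^{\oplus(n-1)}[1]$, forcing $n[s] = 0$), or cited from the computation behind \eqref{E:Grothen} in the spirit of \cite{PavicShinder}. I expect the main obstacle to be bookkeeping rather than conceptual: one must make sure that the equivalence of looped categories $(\ch,\Omega) \simeq (\ch',\Omega_{K_{n,1}})$ is genuinely a \emph{left triangulated} equivalence (not merely a looped one) before invoking functoriality of Heller stabilization — but this is exactly what Lemma \ref{L:Unique} buys us, since on both sides the left triangulated structure is the unique (trivial) one and is therefore automatically preserved by any looped equivalence. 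A secondary point of care is checking the additivity and well-definedness of $\Omega$ on the quotient $\ul{\ssmod}\,K_{n,1}$, which is immediate from \eqref{E:SysSeSi} and the fact that semisimple modules have no nonzero projective summands.
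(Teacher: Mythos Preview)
Your proposal is correct and follows essentially the same route as the paper: identify $(\ch,\Omega)$ with $(\ul{\ssmod}\,K_{n,1},\Omega_{K_{n,1}})$ via Lemma~\ref{L:Unique} and part (1), then use Corollary~\ref{C:Stabili} together with Theorem~\ref{T:KV} to reach $D_{sg}(K_{n,1})$. The one point to tighten is (3): your triangle argument only yields $n[s]=0$, i.e.\ that $K_0$ is a \emph{quotient} of $\Z/n\Z$, and the reference to \eqref{E:Grothen}/\cite{PavicShinder} concerns the quotient singularities rather than $K_{n,1}$; the paper instead invokes the general fact $K_0(D_{sg}(A)) \cong \Z/\dim_k(A)\,\Z$ for finite-dimensional local $k$-algebras $A$, which gives the exact order directly.
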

\begin{proof}
Since $\ch$ is \mm{split simple}, all objects in $\ch$ are isomorphic to $s^{\oplus m}$ for some $m \geq 0$. This shows (1). \mm{Let us show} (2). \mm{By \eqref{E:SysSeSi},} the inclusion of left triangulated categories \mm{\eqref{2.6} satisfies the assumptions of Corollary \ref{C:Stabili}. This} gives a triangle equivalence between the stabilizations, cf. \cite[Proof of Thm 4.4.]{ChenQuadratic}.
\begin{align}
S(\ul{\ssmod}\,K_{n, 1}, \Omega_{K_{n, 1}}) \cong S(\ul{\mod}\,K_{n, 1}, \Omega_{K_{n, 1}}).
\end{align}
By Theorem \ref{T:KV}, the stabilization of $(\ul{\mod}\,K_{n, 1}, \Omega_{K_{n, 1}})$ is triangle equivalent to $D_{sg}(K_{n,1})$. Using Lemma \ref{L:Unique} and (1), the left triangulated category $(\ul{\ssmod}\,K_{n, 1}, \Omega_{K_{n, 1}})$ is equivalent to $(\ch, \Omega)$. In particular, their stabilizations are triangle equivalent. Summing up, we have a chain of triangle equivalences proving (2):
\begin{align}
S(\ch, \Omega) \cong S(\ul{\ssmod}\,K_{n, 1}, \Omega_{K_{n, 1}}) \cong S(\ul{\mod}\,K_{n, 1}, \Omega_{K_{n, 1}}) \cong D_{sg}\left(K_{n, 1}\right)\mm{.}
\end{align}
Finally, (3) follows from (2)  and 
$K_0(D_{sg}(A)) \cong \Z/\dim(A) \Z$ for finite dimensional local algebras $A$. 
\end{proof}

The following structure result is a direct consequence. This will not be used later. 

\begin{cor}
For $i=1, 2$, let $(\ch_i, \Omega_i)$  be left triangulated categories, 
where the $\ch_i$ are \mm{split simple} categories. Then the following statements are equivalent.
\begin{enumerate}
\item There is a triangle equivalence between the stabilizations $S(\ch_1, \Omega_1) \cong S(\ch_2, \Omega_2)$.
\item There is an isomorphism of Grothendieck groups $K_0(S(\ch_1, \Omega_1)) \cong K_0(S(\ch_2, \Omega_2))$.
\end{enumerate}
\end{cor}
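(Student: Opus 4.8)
The plan is to reduce both implications to Proposition \ref{P:Key}. The direction $(1) \Rightarrow (2)$ is formal and will not even use the proposition: any triangle equivalence is in particular an equivalence of the underlying $k$-linear categories that preserves triangles, hence it induces an isomorphism of Grothendieck groups $K_0(S(\ch_1, \Omega_1)) \cong K_0(S(\ch_2, \Omega_2))$.

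For $(2) \Rightarrow (1)$ I would first unwind Proposition \ref{P:Key} separately for each index $i$. Writing $s_i$ for the simple object of $\ch_i$, part (1) supplies an integer $n_i \geq 1$ with $\Omega_i(s_i) \cong s_i^{\oplus(n_i - 1)}$; part (2) supplies a triangle equivalence $S(\ch_i, \Omega_i) \cong D_{sg}(K_{n_i, 1})$; and part (3) supplies a group isomorphism $K_0(S(\ch_i, \Omega_i)) \cong \Z/n_i\Z$. Assuming (2) and combining it with the isomorphisms from part (3) yields $\Z/n_1\Z \cong \Z/n_2\Z$, and comparing the orders of these finite cyclic groups forces $n_1 = n_2$. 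Then $K_{n_1, 1}$ and $K_{n_2, 1}$ are literally the same algebra, so composing the triangle equivalences of part (2) gives $S(\ch_1, \Omega_1) \cong D_{sg}(K_{n_1, 1}) \cong S(\ch_2, \Omega_2)$, which is the triangle equivalence required in (1).

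I do not expect any genuine obstacle: once Proposition \ref{P:Key} is available, the corollary is a short bookkeeping argument. The only point worth a line of care is the elementary implication $\Z/n_1\Z \cong \Z/n_2\Z \Rightarrow n_1 = n_2$ — immediate, since $n$ is recovered as the order of $\Z/n\Z$ — together with the degenerate case $n_i = 1$, in which the relevant stabilization is the zero category; nothing in the argument changes there.
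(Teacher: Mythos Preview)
Your proposal is correct and matches the paper's own treatment: the paper states this corollary as ``a direct consequence'' of Proposition~\ref{P:Key} and gives no separate proof, and your argument is exactly the intended unwinding of that proposition together with the formal implication $(1)\Rightarrow(2)$.
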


\begin{cor}\label{C:Servier}
Let $R$ be a Noetherian $k$-algebra with syzygy functor $\Omega_R\colon \ul{\mod} \ R \to \ul{\mod} \ R$. 

\mm{Assume there is a finitely generated $R$-module $M$ such that the following conditions hold:
\begin{itemize}
\item[(s1)] $\ul{\End}_R(M) \cong k$.
\item[(s2)] $\Omega_R(M) \cong M^{\oplus n-1}$ for an integer $n \in \Z_{>0}$.
\item[(s3)] For every finitely generated $R$-module $N$ there is an integer $d \in \Z_{\geq 0}$ such that 
$\Omega_R^d(N) \cong M^{\oplus m}$. 
\end{itemize}
} 

\noindent
Then \mm{there is an isomorphism of groups}
\begin{align}
K_0(D_{sg}(R)) \cong \Z/n\Z
\end{align}
and there is a triangle equivalence
\begin{align}
D_{sg}(R) \cong D_{sg}(k[z_1, \ldots, z_{n-1}]/(z_1,\ldots, z_{n-1})^2)\mm{.}
\end{align}
\end{cor}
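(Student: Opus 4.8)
The plan is to reduce this statement to Proposition \ref{P:Key} by exhibiting a suitable left triangulated subcategory of $(\ul{\mod}\, R, \Omega_R)$ to which that proposition applies. Let $\ch$ be the full subcategory of $\ul{\mod}\, R$ whose objects are isomorphic to finite direct sums $M^{\oplus m}$, $m \geq 0$, of the image of $M$ in $\ul{\mod}\, R$. First I would observe that condition (s1) says $\ul{\End}_R(M) \cong k$, so $M$ is a brick in $\ul{\mod}\, R$; since every object of $\ch$ is a finite direct sum of copies of $M$, the category $\ch$ is equivalent as a $k$-linear category to $\mod k$ (the additive closure of a single object with endomorphism ring $k$), i.e. $\ch$ is split simple with simple object $s = M$. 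Here one should note that $M$ is nonzero in $\ul{\mod}\, R$: if $M$ were projective, then (s2) would force $\Omega_R(M) = 0 \cong M^{\oplus n-1}$, hence $n = 1$, and (s3) would make $\ul{\mod}\, R$ trivial, so $D_{sg}(R) = 0 \cong D_{sg}(k)$, which is the statement for $n=1$; so we may assume $M$ is non-projective and $s$ is genuinely simple.

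Next I would check that $(\ch, \Omega_R)$ is a left triangulated subcategory of $(\ul{\mod}\, R, \Omega_R)$ satisfying the hypotheses of Corollary \ref{C:Stabili}. Condition (s2) gives $\Omega_R(M) \cong M^{\oplus n-1} \in \ch$, and since $\Omega_R$ is additive, $\Omega_R(\ch) \subseteq \ch$; together with $\ch$ being semisimple abelian, Corollary \ref{C:sesisubcat} shows $(\ch, \Omega_R) \subseteq (\ul{\mod}\, R, \Omega_R)$ is a left triangulated subcategory. Condition (s3) is exactly the hypothesis of Corollary \ref{C:Stabili} that every $N \in \ul{\mod}\, R$ has some syzygy $\Omega_R^d(N) \cong M^{\oplus m}$ lying in $\ch$. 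Hence Corollary \ref{C:Stabili} yields a triangle equivalence $S(\ch, \Omega_R) \cong S(\ul{\mod}\, R, \Omega_R)$, and by Theorem \ref{T:KV} the right-hand side is $D_{sg}(R)$.

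Finally I would apply Proposition \ref{P:Key} to the left triangulated category $(\ch, \Omega_R)$: part (1) of that proposition, combined with (s2), identifies the relevant integer as $n$; part (2) gives $S(\ch, \Omega_R) \cong D_{sg}(K_{n,1}) = D_{sg}(k[z_1,\ldots,z_{n-1}]/(z_1,\ldots,z_{n-1})^2)$; and part (3) gives $K_0(S(\ch,\Omega_R)) \cong \Z/n\Z$. Chaining with the equivalence $S(\ch,\Omega_R) \cong D_{sg}(R)$ from the previous step produces both assertions. I do not expect a serious obstacle here: the content is entirely in Proposition \ref{P:Key} and Corollary \ref{C:Stabili}, and the only point requiring care is the bookkeeping around the degenerate case $n = 1$ (equivalently $M$ projective) and verifying that the abstractly-defined $\ch$ is genuinely split simple rather than, say, having $\ul\End_R(M)$ larger than $k$ because of extra endomorphisms — but (s1) rules that out by fiat.
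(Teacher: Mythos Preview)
Your proposal is correct and follows essentially the same route as the paper: define $\ch=\ul{\add}\,M$, use (s1) to see it is split simple, (s2) together with Corollary~\ref{C:sesisubcat} to make it a left triangulated subcategory, (s3) with Corollary~\ref{C:Stabili} and Theorem~\ref{T:KV} to identify its stabilization with $D_{sg}(R)$, and then invoke Proposition~\ref{P:Key}. Your extra paragraph handling the degenerate case $n=1$ is not in the paper's proof but is harmless and arguably a useful sanity check.
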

\begin{proof}
\mm{Consider the additive subcategory $\ch:=\ul{\add} \ M \subseteq \ul{\mod} \ R$ generated by $M$. Condition (s1) implies that $\ch$ is split simple with simple object $M$ and condition (s2) ensures that the assumptions of Corollary \ref{C:sesisubcat} are satisfied. Using (s3), we can apply}
Corollary \ref{C:Stabili} \mm{to get} a triangle equivalence
$S(\ch, \Omega_R) \cong S(\ul{\mod}\,R, \Omega_R)$.
By Theorem \ref{T:KV}, we have a triangle equivalence
$S(\ul{\mod}\,R, \Omega_R) \cong D_{sg}(R)$. \mm{Applying} Proposition \ref{P:Key} \mm{to $(\ch, \Omega_R)$ completes} the proof.
\end{proof}

\section{Proof of the Theorem} \label{S:Proof}
\noindent
First, we recall the following special case of the singular equivalence \eqref{E:Yang}
\begin{align}
D_{sg}\left(\C\llbracket y_1, y_2\rrbracket^{\frac{1}{4}(1, 1)}\right) \cong D_{sg}\left(\frac{\C[z_1, z_2 , z_{3}]}{(z_1, z_2, z_{3})^2}\right).
\end{align}
\mm{In order to prove Theorem \ref{T:Main}, it remains to prove the following} equivalence
\begin{align}
\mm{D_{sg}(R_{1, 1, 1}):=}D_{sg}\left(\C\llbracket x_1, x_2, x_3\rrbracket^{\frac{1}{2}(1, 1, 1)}\right) \cong D_{sg}\left(\frac{\C[z_1, z_2, z_3]}{(z_1, z_2, z_3)^2}\right),
\end{align}
\mm{which follows from Corollary \ref{C:Servier}, using Proposition \ref{P:CheckSyzygySimple} and $K_0(D_{sg}(R_{1, 1, 1})) \cong \Z/4\Z$ (cf.~\cite[Ex 2.17]{PavicShinder}) to check that the assumptions of Corollary \ref{C:Servier} are satisfied for $n=4$.

The proof of Proposition \ref{P:CheckSyzygySimple} uses some facts 
about the stable category of maximal Cohen--Macaulay $R_{1, 1, 1}$-modules $\ul{\mathsf{CM}}\, R_{1, 1, 1}:=\mathsf{CM}\, R_{1, 1, 1}/\proj R_{1, 1, 1}$. We have collected this information in the following proposition, cf.~\cite{AuslanderReiten} \& \cite{Yoshino}.   

\begin{prop} \label{P:Yoshino}
\begin{itemize}
\item[(a)] There is a $\C$-linear equivalence $\ul{\mathsf{CM}}\, R_{1, 1, 1} \cong \ul{\add} \ \omega \oplus M$, where $\omega$ denotes the canonical $R_{1, 1, 1}$-module and $M:=\Omega(\omega)$ is its first syzygy. Both $\omega$ and $M$ are indecomposable.
\item[(b)] $\ul{\End}_R(M) \cong \C$.
\end{itemize}  
\end{prop}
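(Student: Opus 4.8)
The statement to prove is Proposition \ref{P:Yoshino}, which collects two facts about the stable category of maximal Cohen--Macaulay modules over $R_{1,1,1} = \C\llbracket x_1, x_2, x_3\rrbracket^{\frac{1}{2}(1,1,1)}$: (a) that $\ul{\mathsf{CM}}\, R_{1,1,1} \cong \ul{\add}\,(\omega \oplus M)$ with $\omega$ the canonical module, $M = \Omega(\omega)$, and both indecomposable; and (b) that $\ul{\End}_R(M) \cong \C$.

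The plan is to use the explicit representation-theoretic description of cyclic quotient singularities. First I would identify $R_{1,1,1}$ as the invariant ring $S^G$ where $S = \C\llbracket x_1, x_2, x_3\rrbracket$ and $G = \Z/2\Z$ acts by $-1$ on each variable; equivalently, $R_{1,1,1}$ is the second Veronese subring of $\C\llbracket x_1,x_2,x_3\rrbracket$. By the Auslander correspondence / theory of McKay quivers for finite subgroups of $\GL$, the category $\mathsf{CM}\, R_{1,1,1}$ is governed by the skew group algebra $S * G$, and the indecomposable MCM modules are (essentially) $S \otimes_\C \rho$ for the irreducible $G$-representations $\rho$ — here there are exactly two, the trivial one $\rho_0$ (giving $R_{1,1,1}$ itself, which is free and hence zero in the stable category) and the sign representation $\rho_1$. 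So $\ul{\mathsf{CM}}\, R_{1,1,1}$ has essentially one nonzero indecomposable coming from $\rho_1$; I would then check that this module is $\omega$ (the canonical module of a Gorenstein-in-codimension... or rather: since $G \subset \SL$ fails — $\det = -1 \neq 1$ as $m=3$ is odd — the ring is not Gorenstein, and $\omega \not\cong R_{1,1,1}$, so $\omega$ is the nontrivial indecomposable). That gives the module $\rho_1 \otimes S$ is $\omega$, and then $M = \Omega(\omega)$ is its first syzygy; I would need to see that $M$ is again indecomposable MCM and that $\{[\omega], [M]\}$ exhausts the nonzero indecomposables up to the obvious shift relations, i.e. $\Omega(M)$ lands back in $\ul{\add}(\omega \oplus M)$. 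This is where I would lean on Yoshino's book and Auslander--Reiten's paper, which compute the AR quiver of $\ul{\mathsf{CM}}$ for such quotient singularities explicitly; the AR quiver here should be small (finite CM type, since cyclic quotient singularities in dimension $\geq 2$ need not be finite type in general, but this specific one is tractable).

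For part (b), once $M = \Omega(\omega)$ is pinned down explicitly, computing $\ul{\End}_R(M)$ reduces to a finite-dimensional linear algebra computation. The cleanest route: $\End_R(M)$ as an actual module is again an MCM module (reflexive), and $\ul{\End}_R(M) = \End_R(M)/(\text{maps factoring through projectives})$. Since $M$ is indecomposable, $\End_R(M)$ is local, so $\ul{\End}_R(M)$ is either $0$ or a local $\C$-algebra with residue field $\C$; I need to show it is $\C$, i.e. one-dimensional and with trivial radical. One way: use the AR-triangle (= AR sequence) ending in $M$ in $\ul{\mathsf{CM}}$, together with the fact that $\dim_\C \ul{\Hom}_R(M, M)$ can be read off the AR quiver — if $M$ is not at a "branch point" and the multiplicities in the relevant AR sequence are $1$, one concludes $\ul{\End}_R(M) = \C$. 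Alternatively, and perhaps more robustly, I would use that $\tau M \cong \Omega M$ or a similar relation coming from the (non-Gorenstein) AR duality for MCM modules over this ring, plus Serre duality / the structure of $\ul{\mathsf{CM}}$ as a category with a Serre functor, to force the endomorphism ring down to $\C$.

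The main obstacle is part (a): justifying that $\ul{\mathsf{CM}}\, R_{1,1,1}$ has precisely the two indecomposables $\omega$ and $M$ (equivalently, that this ring has finite CM type and computing its AR quiver) requires either a clean citation to Auslander--Reiten \cite{AuslanderReiten} / Yoshino \cite{Yoshino} with the right specialization, or redoing the McKay-quiver computation for $\Z/2 \subset \GL(3,\C)$ acting by $-\id$ and tracking which $S*G$-summand is the canonical module and how syzygies permute the nonzero indecomposables. Part (b) is then comparatively routine once (a) is in hand, being a bounded computation in a small AR quiver; the subtlety there is only to make sure no nontrivial nilpotent endomorphism of $M$ survives in the stable quotient, which follows from the AR-theoretic description of $\ul{\Hom}(M,M)$.
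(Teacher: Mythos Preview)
Your approach to part (a) contains a genuine gap. You invoke a McKay-type bijection between irreducible $G$-representations and indecomposable MCM $S^G$-modules, but this bijection (Auslander's theorem) is specific to dimension~$2$; it fails for $\dim S = 3$. Following your count, $G=\Z/2\Z$ has two irreducibles, so you predict two indecomposable MCM modules---one projective ($R_{1,1,1}$) and one non-projective (the ``odd'' summand of $S$, which is indeed $\omega$)---and hence \emph{one} indecomposable in $\ul{\mathsf{CM}}\,R_{1,1,1}$. But the statement you are trying to prove asserts there are \emph{two}: $\omega$ and its syzygy $M=\Omega(\omega)$, which is a genuinely new indecomposable not arising from the McKay construction. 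You seem to sense the tension (``I would need to see that $\{[\omega],[M]\}$ exhausts the nonzero indecomposables'') and retreat to citing Yoshino and Auslander--Reiten, but the concrete reasoning you offer before that retreat gives the wrong answer.

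The paper's proof is a direct appeal to the explicit AR quiver computed in \cite[Prop.~16.10]{Yoshino}: $\mathsf{CM}\,R_{1,1,1}$ has exactly three indecomposables $\omega$, $M$, $R_{1,1,1}$, with three arrows $\omega\to M$, three arrows $M\to R_{1,1,1}$, and three arrows $R_{1,1,1}\to\omega$. Part (a) follows by deleting the projective vertex. For part (b), the key observation---much cleaner than your proposed route through AR-sequence multiplicities or Serre duality---is that after deleting $R_{1,1,1}$, the vertex $M$ becomes a \emph{sink} in the stable AR quiver: there are no irreducible morphisms out of $M$ in $\ul{\mathsf{CM}}\,R_{1,1,1}$, so the radical of $\ul{\End}_R(M)$ is zero and $\ul{\End}_R(M)\cong\C$.
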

}
\begin{proof}
\mm{We first describe the Auslander--Reiten quiver of $\mathsf{CM}\, R_{1, 1, 1}$ using \cite[Prop.~16.10]{Yoshino} and then explain how both statements (a) and (b) can be deduced from this:}
\begin{equation}\label{E:ARQ}\begin{tikzpicture}[description/.style={fill=white,inner sep=2pt}, baseline=($(current bounding  box.north) + (current bounding  box.center)$)/2]

\matrix (n) [matrix of math nodes, row sep=2em,
                 column sep=2em, text height=1.5ex, text depth=0.25ex,
                 inner sep=2pt, nodes={inner xsep=0.3333em, inner
ysep=0.3333em}] at (0, 0)
    {    \omega  && M && R_{1, 1, 1}  &&& \omega  && M\\
          };
          
\node at ($(n-1-1.west) - (7mm,0mm)$) {(i)};   
\node at ($(n-1-8.west) - (7mm,0mm)$) {(ii)};      

\draw[->] ($(n-1-8.east) + (0,1mm)$) to ($(n-1-10.west) + (0mm,1mm)$);
    \draw[->] ($(n-1-8.east) + (0,0mm)$) to ($(n-1-10.west) + (0mm,0mm)$);
    \draw[->] ($(n-1-8.east) + (0,-1mm)$) to ($(n-1-10.west) + (0mm,-1mm)$);


    \draw[->] ($(n-1-1.east) + (0,1mm)$) to ($(n-1-3.west) + (0mm,1mm)$);
    \draw[->] ($(n-1-1.east) + (0,0mm)$) to ($(n-1-3.west) + (0mm,0mm)$);
    \draw[->] ($(n-1-1.east) + (0,-1mm)$) to ($(n-1-3.west) + (0mm,-1mm)$);
    
 \draw[<-] ($(n-1-5.west) + (0,1mm)$) to ($(n-1-3.east) + (0mm,1mm)$);
    \draw[<-] ($(n-1-5.west) + (0,0mm)$) to ($(n-1-3.east) + (0mm,0mm)$);
    \draw[<-] ($(n-1-5.west) + (0,-1mm)$) to ($(n-1-3.east) + (0mm,-1mm)$);
    
      \draw[->] ($(n-1-5.south west) + (1mm,-0mm)$) .. controls +(-3.7mm,-4mm)
and +(+3.7mm,-4mm) .. node[yshift=-3mm] [midway]{} ($(n-1-1.south east) + (0mm,0mm)$);

      \draw[->] ($(n-1-5.south west) + (2mm,-0mm)$) .. controls +(-3.7mm,-6mm)
and +(+3.7mm,-6mm) .. node[yshift=-3mm] [midway]{} ($(n-1-1.south east) + (-2mm,0mm)$);

      \draw[->] ($(n-1-5.south west) + (3mm,-0mm)$) .. controls +(-3.7mm,-8mm)
and +(+3.7mm,-8mm) .. ($(n-1-1.south east) + (-4mm,0mm)$);
\end{tikzpicture}\end{equation}
\mm{\noindent
The quiver \eqref{E:ARQ} (i) is the Auslander--Reiten quiver in \cite[(16.10.5)]{Yoshino}\footnote{There are two small typos in \cite[(16.10.5)]{Yoshino}: both $R$ and $S_{-1}$ should be replaced by $\widehat{R}$ and $\widehat{S_{-1}}$, respectively.} translated to our notation: firstly, $R_{1, 1, 1} \cong \widehat{R}(^t(2, -1, -1, -1))=\widehat{R}$, by the comment between \cite[Prop.~16.10]{Yoshino} and its proof. Moreover, $\omega \cong \widehat{S_{-1}}$ is the canonical module by \cite[top of p.~150]{Yoshino} and $M \cong \Omega(\omega)$ follows from \cite[(16.10.3)]{Yoshino}.

The quiver \eqref{E:ARQ} (ii), describes indecomposable objects and irreducible morphisms in the stable category $\ul{\mathsf{CM}}\, R_{1, 1, 1}$.
It is obtained from \eqref{E:ARQ} (i) by removing the vertex $R_{1, 1, 1 }$ and all arrows incident to it. 
In particular, $\omega$ and $M$ form a complete set of isomorphism classes of indecomposable objects in the stable category. This shows (a). 
There are no arrows starting in the vertex $M$ of the quiver \eqref{E:ARQ} (ii).
This implies that the only non-trivial stable endomorphisms of $M$ are of the form $\C \cdot \id_M$ and completes the proof.  }
\end{proof}

\mm{
By the discussion above, the following proposition completes the proof of Theorem \ref{T:Main}.
\begin{prop} \label{P:CheckSyzygySimple}
Let $\omega \in \mathsf{CM} R_{1, 1, 1}$ be the canonical module. Then $M:=\Omega(\omega)$ satisfies the conditions (s1) -- (s3) of Corollary \ref{C:Servier} for some integer $n \in \Z_{>0}$.
\end{prop}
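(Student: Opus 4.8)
The plan is to verify the three conditions (s1)--(s3) for $M = \Omega(\omega)$ by drawing on the structural information about $\ul{\mathsf{CM}}\, R_{1,1,1}$ collected in Proposition \ref{P:Yoshino}, together with the fact that $R_{1,1,1}$ is a (normal, isolated) Gorenstein quotient singularity, so that $D_{sg}(R_{1,1,1}) \cong \ul{\mathsf{CM}}\, R_{1,1,1}$ by Buchweitz's equivalence and, crucially, $\Omega$ is an \emph{autoequivalence} of the stable category. Condition (s1), namely $\ul{\End}_{R_{1,1,1}}(M) \cong \C$, is literally part (b) of Proposition \ref{P:Yoshino}, so nothing remains to do there beyond citing it.

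For (s2) I would argue as follows. Since $R_{1,1,1}$ is Gorenstein of Krull dimension three, the syzygy functor $\Omega$ on $\ul{\mathsf{CM}}\, R_{1,1,1}$ is a triangle autoequivalence and $\Omega^{-1} = \Omega^{\text{cosyzygy}}$. By Proposition \ref{P:Yoshino}(a) the only indecomposables in the stable category are $\omega$ and $M = \Omega(\omega)$, so $\Omega(M) = \Omega^2(\omega)$ must be a direct sum of copies of $\omega$ and $M$; I need to pin down which one and with what multiplicity. The cleanest route is to use the Auslander--Reiten triangle (equivalently the almost split sequence) ending at $\omega$: from the AR quiver \eqref{E:ARQ}(ii) the middle term of the AR triangle $\tau(\omega) \to E \to \omega \to$ is a sum of copies of $M$, with multiplicity read off from the number of arrows, and $\tau = \Omega^{2-\dim}\circ(-)\otimes\omega \cong \Omega^{-1}$ on $\ul{\mathsf{CM}}$ for a Gorenstein isolated singularity of dimension $3$ (here $\omega \cong R$ up to the grading twist, so $\tau \cong \Omega^{-1}$). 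Rotating this triangle gives $\Omega(\omega) \to M^{\oplus r} \to \omega \to \omega[1]$, i.e. an exact triangle $M \to M^{\oplus r} \to \omega \to$, and then applying $\Omega$ once more and using the triple-arrow structure $\omega \rightrightarrows M$ (three arrows, so $r=3$ in the relevant spot) yields $\Omega(M) \cong M^{\oplus 3}$; comparing with $\Omega(M) = M^{\oplus(n-1)}$ forces $n = 4$, consistent with $K_0(D_{sg}(R_{1,1,1})) \cong \Z/4\Z$. Alternatively, and perhaps more robustly, one can bypass the AR-triangle bookkeeping entirely: the rank-$1$ MCM module $\omega$ has a $2$-periodicity-free minimal resolution, and the Betti numbers of $\omega$ over $R_{1,1,1}$ can be computed directly from the known minimal free resolution of $\C$ over $R_{1,1,1}$ (a quotient of a power series ring by a Veronese-type ideal), giving $\Omega^2(\omega) \cong M^{\oplus 3}$; I will use whichever computation is shortest to justify $n = 4$.

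For (s3) I would use that $\ul{\mathsf{CM}}\, R_{1,1,1}$ has only finitely many indecomposables, namely $\omega$ and $M$, and that $\Omega$ permutes/mixes them as just determined. Concretely: $\Omega(\omega) = M$ (up to the stable iso) and $\Omega(M) = M^{\oplus 3}$, so for \emph{any} MCM module $N$, writing $N \cong \omega^{\oplus a} \oplus M^{\oplus b}$ in the stable category, we get $\Omega(N) \cong M^{\oplus(a + 3b)}$, already a sum of copies of $M$; hence $d = 1$ works for every MCM module. For a general finitely generated $R_{1,1,1}$-module $N$ (not necessarily MCM), since $R_{1,1,1}$ is Cohen--Macaulay of Krull dimension $3$, the syzygy $\Omega^{3}(N)$ is MCM (the depth lemma: taking syzygies raises depth until it reaches $\dim R_{1,1,1} = 3$), and then one more syzygy lands in $\ul{\add}\, M$ by the MCM case; so $d = 4$ works for every finitely generated module, and (s3) holds. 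The main obstacle is the precise determination of the multiplicity in (s2) — getting $\Omega(M) \cong M^{\oplus 3}$ rather than $M^{\oplus 2}$ or a mixture involving $\omega$ — which is exactly where the explicit description of the AR quiver \eqref{E:ARQ} (the triple arrows, and the disappearance of arrows out of $M$ in the stable quiver) does the real work; everything else is formal once $\Omega$ is known to be an autoequivalence and the stable category is known to be of finite representation type with the two indecomposables $\omega$ and $M$.
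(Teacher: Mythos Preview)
Your argument has a genuine gap: $R_{1,1,1}=\C\llbracket x_1,x_2,x_3\rrbracket^{\frac{1}{2}(1,1,1)}$ is \emph{not} Gorenstein. The generator $\mathsf{diag}(-1,-1,-1)$ has determinant $-1$, so the cyclic group does not lie in $\mathrm{SL}(3,\C)$; equivalently, the canonical module $\omega$ is a non-free indecomposable in $\mathsf{CM}\,R_{1,1,1}$, which is precisely what Proposition~\ref{P:Yoshino}(a) records. Consequently Buchweitz's equivalence $D_{sg}\cong\ul{\mathsf{CM}}$ is not available, $\ul{\mathsf{CM}}\,R_{1,1,1}$ is only \emph{left} triangulated (this is why the paper works with stabilizations throughout), $\Omega$ is not an autoequivalence there, and the formula $\tau\cong\Omega^{-1}$ you invoke for $3$-dimensional Gorenstein rings does not apply. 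The AR-triangle manipulation you propose for (s2) therefore does not go through, and the ``alternative'' via Betti numbers is not actually carried out.

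The paper's treatment of (s2) avoids all of this and is both shorter and more robust: since syzygies of MCM modules are MCM, Proposition~\ref{P:Yoshino}(a) gives $\Omega(M)\cong M^{\oplus(n-1)}\oplus\omega^{\oplus k}$ for some $n\geq 1$, $k\geq 0$. If $k>0$ then $0\neq\ul{\Hom}_{R_{1,1,1}}(\Omega(M),\omega)\cong\Ext^1_{R_{1,1,1}}(M,\omega)$, contradicting the fact that $\omega$ is injective in $\mathsf{CM}\,R_{1,1,1}$; hence $k=0$. Note that the proposition only asks for \emph{some} $n\in\Z_{>0}$: the paper does not compute $n$ here, but deduces $n=4$ afterwards from $K_0(D_{sg}(R_{1,1,1}))\cong\Z/4\Z$ together with Corollary~\ref{C:Servier}. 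Your verification of (s1) and the depth-lemma part of (s3) are fine and match the paper; once (s2) is fixed as above, your (s3) argument (that $\Omega^3(N)$ is MCM and one further syzygy lands in $\ul{\add}\,M$) is exactly the paper's.
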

\begin{proof}
Condition (s1) holds by Proposition \ref{P:Yoshino} (b). To see (s2), we first note that the syzygy $\Omega(N)$ of a maximal Cohen--Macaulay module $N$ is again maximal Cohen--Macaulay, \cite[Prop. 1.3.]{Yoshino}. In combination with 
Proposition \ref{P:Yoshino} (a), this shows that
\begin{align}\label{E:s2}
\Omega(M) \cong M^{\oplus n-1} \oplus \omega^{\oplus k} \qquad \text{ for } n \in \Z_{>0} \text{ and } k \in \Z_{\geq 0}.
\end{align}
If $k>0$, then
$0 \neq \ul{\Hom}_{R_{1,1,1}}(\Omega(M), \omega) \cong \Ext^1_{R_{1,1,1}}(M, \omega) $, contradicting the fact that $\omega$ is an injective object in $\mathsf{CM}\,{R_{1,1,1}}$, cf. \cite[Cor. 1.13]{Yoshino}. This shows that $k=0$ and therefore \eqref{E:s2} implies (s2). We prove that condition (s3) holds by showing
\begin{align}\label{E:4S}
\Omega^4(X) \in \ul{\add}\, M,
\end{align}
for all $X \in \mod\,{R_{1,1,1}}$. Indeed, since $R_{1,1,1}$ is Cohen--Macaulay of Krull dimension $3$, we see that $\Omega^3(X)$ is a maximal Cohen--Macaulay module, cf.~\cite[Prop.~1.4]{Yoshino}. Thus Proposition \ref{P:Yoshino} (a) shows that $\Omega^3(X) \cong M^{\oplus m} \oplus \omega^{\oplus l}$. Now $M=\Omega(\omega)$ and condition (s2) show \eqref{E:4S}.
\end{proof}
}

\mm{ 
\section{Appendix: Graded versus ungraded singular equivalences}
\noindent
For group graded algebras $A$, one can define \emph{graded singularity categories} 
\begin{align}
D_{sg}^{gr}(A)=D^b(\grmod A)/K^b(\grproj A).
\end{align}
In this appendix, we describe graded commutative algebras $R_{d+1}$ and $R_{d+2}$ of Krull dimensions $d+1$ and $d+2$, respectively, which are known \cite[Corollary 3.23 (d)]{HIMO} to have equivalent graded singularity categories.
Then we show that their `ungraded' singularity categories are not triangle equivalent, by proving the following more general statement: 
\begin{align}\label{E:Knoerrer1}
D_{sg}(S/(f)) \not\cong D_{sg}(S \llbracket X_0 \rrbracket/(f-X_0^2)), \text{ for non-zero } f \in S:=\mathbb{C} \llbracket X_1, \ldots, X_{d +1} \rrbracket,
\end{align}
such that $(S/(f))_\mathfrak{p}$ is non-singular for all prime ideals $\mathfrak{p} \neq (X_1, \ldots, X_{d +1})$. In other words, 
for isolated singularities, Kn{\"o}rrer's equivalences \cite{KnorrerCMmodules} are not $1$-periodic. This answers a question of E. Shinder.

We first describe the algebras $R_{d+1}\cong R$ and $R_{d+2} \cong R'$ appearing in \cite[Corollary 3.23 (d)]{HIMO}. Note that our presentation \eqref{E:present} for $R_{d+2}$ is obtained from the `normal form' \cite[top of p.13]{HIMO} by renaming the variables as follows: $X_i \to X_{i-1}$.  Let $P=\C[X_1, \ldots, X_{d+2}]$, then 
\begin{align}\label{E:present}
R_{d+1}=P/(f)  \quad \text{   and   } \quad R_{d+2}=P[X_{0}]/(f - X^2_{0}), \text{  where   }
\end{align}
\begin{align}
f=X_{d+2}^{p_{d+2}} - \cdots - X_1^{p_1} \qquad \text{  for  } (p_1, \ldots, p_{d+2}) \in \Z_{\geq 2}
\end{align}
and the grading groups $\mathbb{L}_{d+1}$ and $\mathbb{L}_{d+2}$ are abelian of rank $1$, possibly with torsion elements.


Assume there is a singular equivalence \eqref{E:HIMO}. We show that this yields a contradiction.
\begin{align}\label{E:HIMO}
D_{sg}(R_{d+1}) \cong D_{sg}(R_{d+2}).
\end{align}
The algebras $R_{d+1}$ and $R_{d+2}$ have unique isolated singularities in $\mathfrak{m}_{d+1}=(X_1, \ldots, X_{d+2})$ and $\mathfrak{m}_{d+2}=(X_0, \ldots, X_{d+2})$, respectively, cf. \cite[Prop. 2.36]{HIMO}. Therefore, passing to the idempotent completions of \eqref{E:HIMO} yields  
singular equivalences between $\mathfrak{m}_i$-adic completions
\begin{align}\label{E:HIMOcompl}
D_{sg}(\widehat{R}_{d+1}) \cong D_{sg}(\widehat{R}_{d+2}),
\end{align}
cf. \cite[Theorem 3.2(b)]{IyamaWemyss}, which follows from \cite{OrlovIdempotent}. The algebras $\widehat{R}_{d+1}$ and $\widehat{R}_{d+2}$ are special cases of the algebras appearing in \eqref{E:Knoerrer1}. In particular, \eqref{E:HIMOcompl}  contradicts \eqref{E:Knoerrer1}, showing that there are no singular equivalences \eqref{E:HIMOcompl} or \eqref{E:HIMO}.

We now prove \eqref{E:Knoerrer1} by assuming that there is a singular equivalence 
\begin{align}\label{E:Knoerrer2}
D_{sg}(S/(f)) \cong D_{sg}(S \llbracket X_0 \rrbracket/(f-X_0^2)), \text{ for a non-zero } f \in S:=\mathbb{C} \llbracket X_1, \ldots, X_{d +1} \rrbracket,
\end{align}
as in \eqref{E:Knoerrer1} and show that this yields a contradiction. Let $R=S/(f)$ and $R_1=S \llbracket X_0 \rrbracket/(f-X_0^2)$. By assumption, $R$ has isolated singularities and is Gorenstein of Krull dimension $d$. Therefore, $D_{sg}(R)$ is Hom-finite and has Serre functor $\mathbb{S}=[d-1]$, by \cite{Aus84} and  \cite{Aus78}, respectively. The equivalence \eqref{E:Knoerrer2} implies that $D_{sg}(R_1)$ is also Hom-finite and thus $R_1$ also has isolated singularities, see \cite{Aus84}. By applying \cite{Aus78} to the $(d+1)$-dimensional Gorenstein algebra $R_1$, we see that the Serre functor of $D_{sg}(R_1)$ is $\mathbb{S}_1=[d]$. Since Serre functors are unique up to isomorphism \cite{BondalKapranov}, we see that an equivalence \eqref{E:Knoerrer2} would imply $[d-1] \cong \mathbb{S} \cong \mathbb{S}_1  \cong [d]$, which would show $[1] \cong \id$ in $D_{sg}(R) \cong D_{sg}(R_1)$.
In particular, $X[1]\cong X$ for every indecomposable object $X$ in $D_{sg}(R)$. Identifying singularity categories with homotopy categories of matrix factorizations (via \cite{Eis80, Buchweitz}), it follows from \cite[Prop. 2.7. i) \& Lemma 2.5. ii)]{KnorrerCMmodules} that there is an indecomposable object $Y$ in $D_{sg}(R_1)$ such that $Y[1]\not\cong Y$. This contradicts $[1] \cong \id$ in $D_{sg}(R_1)$ and shows that \eqref{E:Knoerrer2} is impossible.


}


\begin{thebibliography}{17}
\mm{\bibitem{Aus78} M.~Auslander, \emph{Functors and morphisms determined by objects}, Representation
  theory of algebras (Proc. Conf., Temple Univ., Philadelphia, Pa., 1976),
  Dekker, New York, 1978, pp.~1--244. Lecture Notes in Pure Appl. Math., Vol.
  37.}
   \bibitem{Aus84}
\mm{\bysame, \emph{{Isolated singularities and existence of almost split
  sequences. Notes by Louise Unger.}}, {Representation theory II, Groups and
  orders, Proc. 4th Int. Conf., Ottawa/Can. 1984, Lect. Notes Math. 1178,
  194-242 (1986)}.}
\bibitem{AuslanderReiten} M.~Auslander, I.~Reiten, \emph{The Cohen-Macaulay type of Cohen-Macaulay rings}, Advances in Mathematics 73.1 (1989): 1-23.
 \bibitem{Bel00} A.~Beligiannis, \emph{The homological theory of contravariantly finite subcategories: Auslander-Buchweitz contexts, Gorenstein categories and (co-)stabilization},
Comm. in Algebra \textbf{28} (10), 4547--4596, 2000.
 \bibitem{BM} A.~Beligiannis, N.~Marmaridis, \emph{Left triangulated categories arising from contravariantly finite subcategories}, Comm. Algebra, \textbf{22}(12):5021--5036, 1994.
\mm{\bibitem{BondalKapranov} A.~Bondal, M.~Kapranov, \emph{Representable functors, Serre functors, and reconstructions}, Izv. Akad. NaukSSSR Ser. Mat. 53 (1989), no. 6, 1183--1205.}
\bibitem{Buchweitz} R.-O.~Buchweitz, \emph{Maximal Cohen-Macaulay modules and Tate-Cohomology over
Gorenstein rings}, Preprint 1987, available at \url{http://hdl.handle.net/1807/16682}.
\bibitem{ChenQuadratic}  X.-W.~Chen, \emph{The singularity category of a quadratic monomial algebra},
The Quarterly Journal of Mathematics, \textbf{69} (2018), 1015--1033. 
\bibitem{ChenYang}
X.-W.~Chen, D.~Yang,
\newblock \emph{Homotopy categories, {L}eavitt path algebras, and {G}orenstein
  projective modules},
\newblock {IMRN}, (10):2597--2633, 2015.
\mm{\bibitem{Eis80}
D.~Eisenbud, \emph{Homological algebra on a complete intersection, with an application to group representations}, Trans.\ Amer.\ Math.\ Soc.\ {\bf 260} (1980) 35--64.}
\bibitem{Heller} A.~Heller, \emph{Stable homotopy categories}, Bull. Amer. Math. Soc. \textbf{74} (1968), 28--63.
\mm{\bibitem{HIMO} M. Herschend, O. Iyama, H. Minamoto, S. Oppermann, \emph{Representation theory of Geigle--Lenzing complete intersections}, Mem. Amer. Math. Soc. (to appear), arXiv:1409.0668.}
\mm{\bibitem{IyamaWemyss} O. Iyama, M. Wemyss, \emph{Singular derived categories of $\mathbb{Q}$-factorial terminalizations and maximal modification algebras}, Adv. Math. 261 (2014), 85--121.}
 \bibitem{KIWY15} M.~Kalck, O.~Iyama, M.~Wemyss, D.~Yang, \emph{Frobenius categories, Gorenstein algebras and rational surface singularities},   
 Compositio Mathematica, vol. \textbf{151}, issue 03, 502--534 (2015).
\bibitem{KK17} M.~Kalck, J.~Karmazyn, \emph{Noncommutative Kn{\"o}rrer type equivalences via noncommutative resolutions of singularities},
arXiv:1707.02836, \href{https://arxiv.org/pdf/1707.02836.pdf}{(pdf)}.
\bibitem{KY16} M.~Kalck, D.~Yang, \emph{Relative singularity categories I: Auslander resolutions}, 
  Advances in Mathematics, vol. \textbf{301}, 973--1021 (2016), \href{https://arxiv.org/pdf/1205.1008.pdf}{(pdf)}.  
\bibitem{KY18} M.~Kalck, D.~Yang, \emph{Relative singularity categories II: DG models},   
  arXiv1803.08192, \href{https://arxiv.org/pdf/1803.08192.pdf}{(pdf)}. 
\mm{\bibitem{KashiwaraSchapira} M.~Kashiwara, P.~Schapira, \emph{Categories and Sheaves},
Grundlehren der Mathematischen Wissenschaften 332, Springer (2006).}
\bibitem{Kawamata15}
Y.~Kawamata,
\newblock \emph{On multi-pointed noncommutative deformations and Calabi-Yau threefolds}, Compos. Math. 154 (2018), no. 9, 1815--1842.
\bibitem{KellerVossieck87}
B.~Keller, D.~Vossieck, \emph{Sous les cat{\'e}gories
  d{\'e}riv{\'e}es}, C. R. Acad. Sci. Paris S{\'e}r. I Math. \textbf{305}
  (1987), no.~6, 225--228. 
  \bibitem{KR}
M.~Khovanov, L.~Rozansky,
\newblock \emph{Matrix factorizations and link homology},
\newblock {Fund. Math.}, 199(1):1--91, 2008.
\bibitem{KnorrerCMmodules}
H.~Kn{\"o}rrer, \emph{Cohen-{M}acaulay modules on hypersurface singularities {I}}, Invent. Math. 88(1):153--164, 1987.
\bibitem{Leavitt} W.~Leavitt, \emph{The module type of a ring}, Trans. Amer. Math. Soc. 103 (1962), 113--130.
\bibitem{Matsui} H.~Matsui, \emph{Singular equivalences of commutative noetherian rings and reconstruction of singular loci}, J. Algebra 522 (2019): 170--194.
\bibitem{Orlov2} D.~Orlov, 
\emph{Triangulated categories of singularities and {D}-branes in {L}andau-{G}inzburg models},
Tr. Mat. Inst. Steklova \textbf{246} (2004), Algebr. Geom. Metody, Svyazi i Prilozh., 240--262.
\bibitem{OrlovIdempotent}
\bysame,
\newblock \emph{Formal completions and idempotent completions of triangulated
  categories of singularities},
\newblock {Adv. Math.}, \textbf{226}(1):206--217, 2011.
\bibitem{PavicShinder} N.~Pavic, E.~Shinder, \emph{K-theory and the singularity category of quotient singularities}, arXiv:1809.10919.
\bibitem{Wemyss18}
M.~Wemyss, \emph{Flops and clusters in the homological minimal program},
  Invent. Math. 211 (2018), 435--521. 
\bibitem{YangPrivate} D.~Yang, Private Communication, Jan 2015.
\bibitem{Yoshino} Y. Yoshino, \emph{Cohen-Macaulay modules over Cohen-Macaulay rings}. London Mathematical Society Lecture Note Series, 146. Cambridge University Press, Cambridge, 1990.
\end{thebibliography}
\end{document}